\makeatletter \@addtoreset{equation}{section}
\newtheorem{prop}{Proposition}[section]
\newtheorem{thm}[prop]{Theorem}
\newtheorem{defn}{Definition}[section]
\newtheorem{corol}[prop]{Corollary}
\newtheorem{examp}{Example}[section]
\newtheorem{rem}{Remark}[section]
\begin{document}

\baselineskip 5true mm
\begin{center}
 {\Large\bf  On semi-Quasi-Einstein Manifold
 \footnote{This work is supported by National Natural Science Foundation of China (No.11871275; No.11371194) and Ministry of Education, Malaysia grant FRGS/1/2019/STG06/UM/02/6.}}
\end{center}
\centerline{ Yanling Han }
\begin{center}
{\scriptsize School of Mathematics and Statistics, Qilu University
of Technology(Shandong Academy of Sciences), \\Jinan 250353,
P.R.China,\\
 E-mail: hanyanling1979@163.com}
\end{center}
\centerline{Avik De}
\begin{center}
{\scriptsize  Department of Mathematical and Actuarial Sciences, Universiti Tunku Abdul Rahman, Bandar sungai Long, \\Kajang 43000, Malaysia\\
 E-mail: de.math@gmail.com}
\end{center}
\centerline{ Peibiao Zhao}
\begin{center}
{\scriptsize  Dept. of Applied Mathematics, Nanjing University of Science and Technology, Nanjing 210094, P. R. China\\
E-mail: pbzhao@njust.edu.cn}
\end{center}

\begin{center} 
\begin{minipage}{128mm}
{\footnotesize {\bf Abstract}  In the present paper we introduce a semi-quasi-Einstein manifold from a semi symmetric metric connection. Among others, the popular Schwarzschild and Kottler spacetimes are shown to possess this structure. Certain curvature conditions are studied in such a manifold with a Killing generator.  \\
{\bf Keywords} Quasi-Einstein Manifold; Semi-symmetric metric
connection; Ricci-symmetric, Kottler spacetime, Schwarzschild spacetime. \\
{\bf MR(2000)} 53C20, 53D11.}
\end{minipage}
\end{center}

\section{ Introduction}

Ricci curvature plays a significant role both in geometry and in the
theory of gravity. If the Ricci curvature of some space satisfies
$S(X,Y)=ag(X,Y)$ for scalar $a$, we call
 it Einstein manifold as such a space satisfies the Eintein's field equations in vacuum.
 Einstein space is naturally generalized to a wider range, such as quasi-Einstein space \cite{CMC1},
 generalized quasi-Einstein space \cite{CMC2}, mixed quasi-Einstein space and so on, by adding some
 curvature constraints to the Ricci tensor.

A quasi-Einstein manifold is the closest and simplest approximation
of the Einstein manifold. It evolved as an exact solution of
einstein's field equations and also in geometry while studying
 quasi-umbilical hypersurfaces. A non-flat Riemannian manifold $(M^{n},g) (n>2) $ is defined to be a
quasi-Einstein manifold if the Ricci tensor is not identically zero
and satisfies the condition
\begin{equation*}
S(X,Y)=ag(X,Y)+b\eta(X)\eta(Y),\forall X,Y\in TM
\end{equation*}
for some scalar functions $a,b\neq 0$, where $\eta$ is a non-zero 1-form such
that $g(X,\xi)=\eta(X),$ $\eta$ is called the associated
1-form and $\xi$ is called the generator of the manifold.

It is also known that the perfect fluid space-time in general
 relativity is a 4-dimensional semi-Riemannian quasi-Einstein
space. The study of these space-time models can help us better
understand the evolution of the universe. Quasi-Einstein manifold was defined by M. C. Chaki and R. K.
Maity\cite{CMC1}, later it was widely studied in
\cite{Be,CM,CS,DD1,DD2,DG1,DG2,FHZ}. Z.L.Li \cite{Li1} discussed the
basic properties of quasi-Einstein manifold, obtained some geometric
characteristics and proved the non-existence of some quasi-Eistein
manifolds. U.C.De and G. C. Ghosh \cite{DG1} proved
a theorem for existence of a QE-manifold and give some examples
about QE-manifolds. V.A.Kiosak \cite{Ki} considered the conformal
mappings of quasi-Einstein spaces and proved that they are closed
with respect to concircular mappings. M.M. Tripathi and J.S. Kim
\cite{TK} studied a quasi-Einstein manifold whose generator belongs
to the $k$-nullity distribution $N(k)$ and have proved that
conformally quasi-Einstein manifolds are certain $N(k)$-quasi
Einstein manifolds.

The purpose of this paper is to investigate quasi-Einstein manifolds
from a rather different perspective. By replacing the Levi-Civita
connection with a semi-symmetric metric connection \cite{Ya}, we
define a semi-quasi-Einstein manifold which generalizes the concept
of quasi-Einstein manifolds. Semi-symmetric
 metric connections are widely used to modify Einstein's gravity theory in form of Einstein-Cartan gravity
 and also in unifying the gravitational and electromagnetic forces. Recently in \cite{semi-rel} the field equations
 are derived from an action principle which is formed by the scalar curvature of a semi-symmetric metric connection.
  The derived equations contain the Einstein and Maxwell equations in vacuum. In fact, Schouten criticized
  Einstein's argument for using a symmetric connection and used the notion of semi-symmetric connections in his approach \cite{schouten}.

We further prove the existence of such structure by analyzing the Schwarzschild and Kottler spacetimes and obtain a necessary and sufficient conditions for such a manifold to be an Einstein space. 

In \cite{Mura} Murathan and Ozgur considered the semi-symmetric
metric connection with unit parallel vector field $P$, and  proved
that $R.\bar{R}=0$ if and only if $M$ is semi-symmetric; if
$R.\bar{R}=0$ or $R.\bar{R}-\bar{R}.R=0$ or $M$ is semi-symmetric and
$\bar{R}.\bar{R}=0$, then $M$ is conformally flat and
quasi-Einstein. Motivated by this we further investigate the manifolds for Ricci symmetric and Ricci semi-symmetric criterion under similar ambience.

The paper is organized as follows: Section 2 recalls the form and
some curvature properties of semi-symmetric metric connection.
Section 3 gives the main definition of semi-quasi-Einstein
manifold ($S(QE^{n})$) and discusses the relations between
$S(QE^{n})$, quasi-Einstein and generalized quasi-Einstein
manifolds. Section 4 concerns with some physical and geometric
characteristics of $S(QE^{n})$ manifold under certain curvature
conditions. Some interesting examples of $S(QE^{n})$ manifold is constructed in the
last section.

 \section{ Preliminaries} \setcounter{section}{2}
 \setcounter{equation}{0}

Let $(M^{n},\nabla)$ be a Riemannian manifold of dimension $n$ and
$\nabla$ be the Levi-Civita connection compatible to the metric $g$.
A semi-symmetric metric connection is defined by
\begin{equation*}
\bar{\nabla}_{X}Y=\nabla_{X}Y+\pi(Y)X-g(X,Y)P, \quad  \forall X,Y\in
TM
\end{equation*}
where $\pi$ is any given 1-form and $P$ is the associated vector
field, $g(X,P)=\pi(X)$.

By direct computations, one can obtain
\begin{eqnarray}\label{21}
&\bar{R}(X,Y)Z=R(X,Y)Z+g(AX,Z)Y-g(AY,Z)X\nonumber\\
&+g(X,Z)AY-g(Y,Z)AX,\notag
\end{eqnarray}
\begin{eqnarray}\label{22}
&g(AY,Z)=(\nabla_{Y}\pi)(Z)-\pi(Y)\pi(Z)+\frac{1}{2}\pi(P)g(Y,Z)\nonumber\\
&=g(\nabla_{Y}P,Z)-\pi(Y)\pi(Z)+\frac{1}{2}\pi(P)g(Y,Z),\notag
\end{eqnarray}
\begin{eqnarray}\label{23}
&g(\bar{\nabla}_YP,Z)-g(\bar{\nabla}_ZP,Y)=g(\nabla_YP,Z)-g(\nabla_ZP,Y)\notag
\end{eqnarray}
\begin{eqnarray}\label{24}
&\bar{S}(Y,Z)=S(Y,Z)-(n-2)(\nabla_{Y}\pi)(Z)+(n-2)\pi(Y)\pi(Z)\nonumber\\
&-\{(n-2)\pi(P)+\sum_{i}g(\nabla_{i}P,e_{i})\}g(Y,Z),
\end{eqnarray}
further one has
\begin{eqnarray*}
\bar{R}(X,Y)Z&=&-\bar{R}(Y,X)Z,\notag\\
\bar{R}(X,Y)Z+\bar{R}(Y,Z)X+\bar{R}(Z,X)Y&=&(g(AZ,Y)-g(AY,Z))X\notag\\
&&+ g(AX,Z)-g(AZ,X))Y\notag\\
&&+(g(AY,X)-g(AX,Y))Z.\notag
\end{eqnarray*}
It is obvious that the Ricci tensor of the semi-symmetric metric
connection is not symmetric unless under certain conditions. For
example, if $\pi$ is closed, then $A$ is symmetric, thus
$\bar{S}$ is also symmetric.

In particular, if a unit $P$ satisfies Killing's equation $g(\nabla_XP,Y)+g(\nabla_YP,X)=0$, then
\begin{eqnarray}\label{2}
&&g(\nabla_{P}P,Y)=0,\quad g(\nabla_XP,X)=0,\quad AX=\nabla_{X}P-\pi(X)P+\frac{1}{2}X,\notag\\
%&&\bar{R}(X,Y,Z,W)=-\bar{R}(X,Y,W,Z),\notag\\
%&&\bar{R}(X,Y)Z+\bar{R}(Y,Z)X+\bar{R}(Z,X)Y=0,\notag\\
%&&R(P,X,Y,Z)=R(X,Y,P,Z)=0,\notag\\
&&\bar{R}(X,Y)P=R(X,Y)P+\pi(Y)\nabla_{X}P-\pi(X)\nabla_{Y}P\notag\\
&&\bar{R}(X,P)Y=R(X,P)Y+g(\nabla_{X}P,Y)-\pi(Y)\nabla_{X}P,\notag\\
&&\bar{R}(X,P)P=R(X,P)P+\nabla_{X}P,\notag\\
&&\bar{S}(Y,Z)=S(Y,Z)-(n-2)g(\nabla_{Y}P,Z)+(n-2)\pi(Y)\pi(Z)-(n-2)g(Y,Z).\notag\\
&&\bar{S}(Y,P)=S(Y,P)=S(P,Y)=\bar{S}(P,Y).\notag
\end{eqnarray}

And if $P$ is a unit parallel vector field with respect to $\nabla$,
\begin{eqnarray}\label{25}
&&g(AX,Y)=g(AY,X),\quad AX=-\pi(X)P+\frac{1}{2}X,\notag\\
&&\bar{R}(X,Y,Z,W)=-\bar{R}(X,Y,W,Z),\notag\\
&&\bar{R}(X,Y)Z+\bar{R}(Y,Z)X+\bar{R}(Z,X)Y=0,\notag\\
&&R(P,X,Y,Z)=R(X,Y,P,Z)=0,\notag\\
&&\bar{R}(X,Y)P=R(X,Y)P=0,\notag\\
&&\bar{R}(P,X,Y,Z)=\bar{R}(X,Y,P,Z)=0,\notag\\
&&\bar{S}(Y,Z)=S(Y,Z)+(n-2)\pi(Y)\pi(Z)-(n-2)\pi(P)g(Y,Z)=\bar{S}(Z,Y),\notag\\
&&\bar{S}(Y,P)=S(Y,P)=0.\notag
\end{eqnarray}

\section{ Semi-quasi-Einstein Manifolds} \setcounter{section}{3}
 \setcounter{equation}{0}
In this section, we define a new structure using the
semi-symmetric metric connection.

\begin{defn}\label{defn31}
 A Riemannian manifold $(M^{n},\bar{\nabla})$, $n=dimM\geq 3$, is said to be a
semi-quasi-Einstein manifold S$(QE^{n})$ if the Ricci curvature tensor components are non-zero and of the forms
$$\hat{S}(Y,Z)=sym\bar{S}(Y,Z)=ag(Y,Z)+b\eta(Y)\eta(Z),$$
where $a$ and $b$ are scalars and $\eta$ is a non-zero 1-form. If $\hat{S}(Y,Z)$ is identically zero, we say the manifold $(M^{n},\bar{\nabla})$ is semi-Ricci flat. 
\end{defn}

\begin{thm}\label{thm31}
If the generator $P$  of manifold $(M^{n},\bar{\nabla})$ is a
Killing vector field with respect to $\nabla$, then an Einstein
manifold $(M^{n},\nabla)$ is a $S(QE^{n})$manifold. 
\end{thm}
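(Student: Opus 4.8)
The plan is to compute the symmetric part of the Ricci tensor $\bar{S}$ of the semi-symmetric metric connection directly from the expression recorded in Section~2 for the case of a unit Killing generator. Under the hypothesis that $P$ is a unit vector field satisfying Killing's equation $g(\nabla_X P,Y)+g(\nabla_Y P,X)=0$, Section~2 gives
\[
\bar{S}(Y,Z)=S(Y,Z)-(n-2)g(\nabla_{Y}P,Z)+(n-2)\pi(Y)\pi(Z)-(n-2)g(Y,Z).
\]
Here $S$, $\pi\otimes\pi$ and $g$ are all symmetric in $Y,Z$, so the only term that can destroy the symmetry of $\bar{S}$ is the covariant-derivative term $g(\nabla_Y P,Z)$.

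First I would symmetrize, writing $\hat{S}(Y,Z)=\tfrac{1}{2}\bigl(\bar{S}(Y,Z)+\bar{S}(Z,Y)\bigr)$. The symmetric part of the problematic term is $\tfrac{1}{2}\bigl(g(\nabla_Y P,Z)+g(\nabla_Z P,Y)\bigr)$, which is exactly the left-hand side of Killing's equation and therefore vanishes identically. This leaves
\[
\hat{S}(Y,Z)=S(Y,Z)+(n-2)\pi(Y)\pi(Z)-(n-2)g(Y,Z).
\]
Next I would insert the Einstein hypothesis on $(M^n,\nabla)$, namely $S(Y,Z)=\lambda\,g(Y,Z)$ for the Einstein scalar $\lambda$, and collect the two multiples of $g$ to obtain
\[
\hat{S}(Y,Z)=\bigl(\lambda-(n-2)\bigr)g(Y,Z)+(n-2)\pi(Y)\pi(Z).
\]
This is precisely the form required by Definition~\ref{defn31} with $a=\lambda-(n-2)$, $b=n-2$, $\eta=\pi$ and generator $\xi=P$.

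Finally I would verify the non-degeneracy requirements of the definition. Since $n\geq 3$ we have $b=n-2\neq 0$, and since $P$ is unit we have $\eta(P)=\pi(P)=g(P,P)=1$, so $\eta$ is a non-zero $1$-form; moreover $\hat{S}(P,P)=\lambda$ while $\hat{S}(Y,Y)=\lambda-(n-2)$ on unit vectors orthogonal to $P$, and these cannot both vanish for $n\geq 3$, so $\hat{S}$ is not identically zero. I do not expect any serious obstacle in this argument: the entire proof reduces to one symmetrization together with the substitution of the Einstein condition. The only point demanding attention is the recognition that the non-symmetric covariant-derivative term is annihilated precisely by the Killing property of $P$, which is what forces the symmetric Ricci tensor into quasi-Einstein form.
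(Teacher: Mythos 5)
Your proof is correct and follows essentially the same route as the paper's: symmetrize $\bar{S}$, observe that the Killing equation annihilates the only non-symmetric term, and substitute the Einstein condition. The one difference is that you invoke the unit-Killing specialization of the Section~2 formula, so your coefficient of $g$ is $-(n-2)$ rather than the paper's $-(n-2)\pi(P)$; the theorem does not actually assume $\|P\|=1$, but your argument extends verbatim by keeping the factor $\pi(P)$ (a scalar function, which Definition~\ref{defn31} permits), and your closing non-degeneracy check is a small extra that the paper omits.
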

\begin{proof} In view of the relation (\ref{24}),
\begin{eqnarray*}
\hat{S}(Y,Z)&=&\frac{1}{2}\{\bar{S}(Y,Z)+\bar{S}(Z,Y)\}\\
&=&S(Y,Z)-(n-2)[(\nabla_{Y}\pi)(Z)+(\nabla_{Z}\pi)(Y)]+(n-2)\pi(Y)\pi(Z)\\
&&-\{(n-2)\pi(P)+\sum_{i}g(\nabla_{i}P,e_{i})\}g(Y,Z).
\end{eqnarray*}
Notice that $(\nabla_{Y}\pi)(Z)+(\nabla_{Z}\pi)(Y)=g(\nabla_{Y}P,Z)+g(\nabla_{Z}P,Y),$
if $P$ is a Killing vector field with respect to $\nabla$, then one has
\begin{eqnarray}
\hat{S}(Y,Z)=S(Y,Z)+(n-2)\pi(Y)\pi(Z)-(n-2)\pi(P)g(Y,Z).\label{22}
\end{eqnarray}
Therefore, if $(M^{n},\nabla)$ is Einstein, then $(M^{n},\bar{\nabla})$ is a $S(QE^{n})$manifold.
\end{proof}

\begin{rem}
The above result is also true if we replace the Einstein manifold with a Ricci-flat manifold.
\end{rem}
\begin{rem}
If the generator $P$ is a unit parallel vector, $\hat{S}(X,Y)=\bar{S}(X,Y)$.
\end{rem}
\begin{thm}
For a unit vector field $P$, $(M^{n},\bar{\nabla})$ is a $S(QE^{n})$ manifold, if for some scalar $\rho$
\begin{eqnarray}\label{33}
\hat{S}(Y,Z)\hat{S}(X,W)-\hat{S}(X,Z)\hat{S}(Y,W)=\rho(g(Y,Z)g(X,W)-g(X,Z)g(Y,W)).
\end{eqnarray}

\end{thm}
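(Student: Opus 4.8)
The plan is to work pointwise and read off the eigenvalue structure of the symmetric tensor $\hat{S}$ directly from the algebraic identity (\ref{33}). Fix a point $p\in M$; since $\hat{S}$ is symmetric, choose a $g$-orthonormal basis $\{e_1,\dots,e_n\}$ of $T_pM$ that diagonalizes it, so that $\hat{S}(e_i,e_j)=\lambda_i\delta_{ij}$. The whole argument is then purely algebraic in these eigenvalues, and the unit vector field $P$ enters only in that it fixes $\hat{S}=\mathrm{sym}\,\bar{S}$ as the object under study.

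First I would substitute $Y=Z=e_i$ and $X=W=e_j$ with $i\neq j$ into (\ref{33}). Since the off-diagonal components $\hat{S}(e_i,e_j)$ vanish and $g(e_i,e_j)=\delta_{ij}$, the left-hand side collapses to $\lambda_i\lambda_j$ and the right-hand side to $\rho$, giving the single family of scalar relations
$$\lambda_i\lambda_j=\rho,\qquad i\neq j.$$
This is the crux of the proof: the full tensorial hypothesis is equivalent to the statement that every pairwise product of distinct eigenvalues equals the one constant $\rho$.

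Next I would split on whether $\rho$ vanishes. If $\rho\neq 0$, then every $\lambda_i\neq 0$, and for distinct $i,j,k$ the equalities $\lambda_i\lambda_j=\lambda_i\lambda_k=\rho$ force $\lambda_j=\lambda_k$; because $n\geq 3$ guarantees a third index is always available, all eigenvalues coincide, say $\lambda_i\equiv a$ with $a^2=\rho$, whence $\hat{S}=a\,g$, the $S(QE^n)$ form with $b=0$. If $\rho=0$, then $\lambda_i\lambda_j=0$ for all $i\neq j$, so at most one eigenvalue is nonzero; writing it as $b$ in the direction $e_{i_0}$ and setting $\eta=g(e_{i_0},\cdot)$ gives $\hat{S}=b\,\eta\otimes\eta$, the $S(QE^n)$ form with $a=0$. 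In both cases $\hat{S}=a\,g+b\,\eta\otimes\eta$ with $\eta\neq 0$ (the non-vanishing of $\hat{S}$ forces $a$ or $b$ to be nonzero), which is exactly the defining condition of a semi-quasi-Einstein manifold.

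I expect the principal obstacle to be conceptual rather than computational: recognizing (\ref{33}) as a Gauss-type quadratic identity for $\hat{S}$ and seeing that evaluating it on an eigenbasis is what linearizes it into the relations $\lambda_i\lambda_j=\rho$. A complementary remark is that tracing (\ref{33}) over the pair $(X,W)$ yields the invariant identity $\hat{S}^2=r\,\hat{S}-(n-1)\rho\,g$, with $r$ the symmetrized scalar curvature; this already shows $\hat{S}$ has at most two distinct eigenvalues, but it does not by itself pin down the multiplicities $1$ and $n-1$ required for the quasi-Einstein form, so the pointwise eigenbasis computation remains the essential step. One should also note that presenting $\eta$ as a genuine smooth $1$-form in the case $\rho=0$ requires the nonzero eigenvalue to remain simple, which holds on the open set where $\hat{S}\neq 0$.
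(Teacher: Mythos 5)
Your argument is correct, but it is a genuinely different route from the paper's. The paper never diagonalizes $\hat{S}$: it substitutes $X=W=P$ into (\ref{33}), sets $\alpha=\hat{S}(P,P)$ and $\eta(Z)=\hat{S}(P,Z)$, solves for $\hat{S}$ in terms of $g$, $\pi\otimes\pi$ and $\eta\otimes\eta$, then substitutes $X=P$ and contracts over $Y=Z$ to get $\eta=\alpha\pi$, arriving at $\hat{S}=\tfrac{\rho}{\alpha}g+\bigl(\alpha-\tfrac{\rho}{\alpha}\bigr)\pi\otimes\pi$. That computation has the virtue of identifying the generator of the resulting quasi-Einstein structure as $P$ itself, but it silently divides by $\alpha=\hat{S}(P,P)$, which is an unstated nondegeneracy hypothesis; your spectral argument needs no such assumption and in fact explains what happens when it fails (the distinguished direction is simply some other eigenvector). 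Your case split also extracts more information than the paper's formula does: $\rho\neq 0$ forces $\hat{S}=a\,g$ with $a^{2}=\rho$ (so in particular $\rho<0$ is impossible for $n\geq 3$), while $\rho=0$ forces rank one; in the first case $b=0$, which is admissible under Definition \ref{defn31} as literally stated (and the paper's own coefficient $\alpha-\rho/\alpha$ can likewise vanish), though it would fail the stricter $b\neq 0$ convention of the introduction. The one caveat worth flagging is that your key step --- orthonormal diagonalization of the symmetric tensor $\hat{S}$ --- is tied to the Riemannian signature assumed in Definition \ref{defn31}; in the Lorentzian examples the paper is ultimately interested in, a symmetric $(0,2)$-tensor need not be diagonalizable in an orthonormal frame, whereas the paper's substitution-and-contraction argument adapts with only minor sign bookkeeping.
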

\begin{proof}
Taking $X=W=P$ in (\ref{33}) we have
\begin{eqnarray*}
\hat{S}(Y,Z)\hat{S}(P,P)-\hat{S}(P,Z)\hat{S}(Y,P)=\rho\{g(Y,Z)-g(P,Y)g(P,Z)\}
\end{eqnarray*}
We denote by
$\alpha=\hat{S}(P,P),\,\hat{S}(P,Z)=g(\hat{Q}Z,P)=\pi(\hat{Q}Z)=\eta(Z)$, then
\begin{eqnarray}\label{34}
\hat{S}(Y,Z)=\frac{\rho}{\alpha}g(Y,Z)-\frac{\rho}{\alpha}\pi(Y)\pi(Z)+\frac{1}{\alpha}\eta(Y)\eta(Z).
\end{eqnarray} 
On the other hand, putting $X=P$ in (\ref{33}) we have
\begin{eqnarray}\label{35}
\hat{S}(Y,Z)\eta(W)-\hat{S}(Y,W)\eta(Z)=\rho\{g(Y,Z)\pi(W)-g(Y,W)\pi(Z)\}
\end{eqnarray}
Substituting (\ref{34}) into (\ref{35}) we obtain
\begin{eqnarray}\label{36}
&&\frac{\rho}{\alpha}\{g(Y,Z)\eta(W)-g(Y,W)\eta(Z)\}-\frac{\rho}{\alpha}\{\eta(W)\eta(Z)\pi(Y)-\pi(W)\pi(Y)\eta(Z)\}\nonumber\\
&&=\rho\{g(Y,Z)\pi(W)-g(Y,W)\pi(Z)\}
\end{eqnarray}

Now in (\ref{36}) , put $Y=Z=e_{i}$ and take sum about $i$, we get
\begin{equation}\label{37}
\eta(W)=\alpha\pi(W)
\end{equation}
we further obtain by substituting (\ref{37}) into (\ref{34})
\begin{eqnarray*}
\hat{S}(Y,Z)=\frac{\rho}{\alpha}g(Y,Z)+(\alpha-\frac{\rho}{\alpha})\pi(Y)\pi(Z).
\end{eqnarray*}
The proof is finished.
\end{proof}

%%%%%%%%%%%%%%%%%%%%%%%%%%%%%%%%%%%%%%%%%%%%%%%%%%%%%
%%%%%%%%%%%%%%%%%%%%%%%%%%%%%%%%%%%%%%%%%%%%%%%%%%%%%

 \section{$(M^n,\bar{\nabla})$-manifold with unit generator}
\setcounter{section}{4}
 \setcounter{equation}{0}
In this section we consider a $(M^n,\bar{\nabla})$-manifold with unit generator $\| P\|=1$.
\begin{defn}\label{defn32}
$(M^{n},\bar{\nabla})$ is said to be $\bar{\nabla}$-symmetric if the curvature tensor satisfies $\bar{\nabla}\bar{R}=0$; $\bar{\nabla}$-Ricci symmetric if $\bar{\nabla}\hat{S}=0$; semi-symmetric if $\bar{R}\cdot \bar{R}=0$; Ricci semi-symmetric if $ \bar{R}\cdot\hat{S}=0$.
\end{defn}
If $H$ is a $(0,k)$-type tensor field, we define the operation $R.H$
by
\begin{eqnarray*}\label{43}
(R(X,Y)\cdot H)(W_{1},\cdots,W_{k})&=&-H(R(X,Y)W_{1},\cdots,W_{k})\\
&&-\cdots-H(W_{1},\cdots,R(X,Y)W_{k}).
\end{eqnarray*}
If $\theta$ is a symmetric $(0,2)$-type tensor, the following
formula can define a $(0,k+2)$-type tensor
\begin{eqnarray}\label{44}
Q(\theta,H)(W_{1},\cdots,W_{k};X,Y)&=&-H((X\wedge_{\theta}Y)W_{1},\cdots,W_{k})\nonumber\\
&&-\cdots-H(W_{1},\cdots,(X\wedge_{\theta}Y)W_{k}),
\end{eqnarray}
where $X\wedge_{\theta}Y$ is given by
$(X\wedge_{\theta}Y)Z=\theta(Y,Z)X-\theta(X,Z)Y.$

When $P$ is a unit Killing field, based on the relation (\ref{22})
we get
\begin{eqnarray*}
\hat{S}(Y,Z)=S(Y,Z)+(n-2)\pi(Y)\pi(Z)-(n-2)g(Y,Z).
\end{eqnarray*}
By direct calculations we obtain
\begin{eqnarray}\label{41}
(\bar{\nabla}_{X}\hat{S})(Y,Z)&=&(\nabla_{X}S)(Y,Z)+(n-2)\{(\nabla_{X}\pi)(Y)\pi(Z)+(\nabla_{X}\pi)(Z)\pi(Y)\}\nonumber\\
&&-\pi(Y)[S(X,Z)+(n-2)\pi(X)\pi(Z)-(n-2)g(X,Z)]\nonumber\\
&&-\pi(Z)[S(Y,X)+(n-2)\pi(X)\pi(Y)-(n-2)g(X,Y)]\nonumber\\
&&+\pi(QY)g(X,Z)+\pi(QZ)g(X,Y)\\
&=&(\nabla_{X}S)(Y,Z)\nonumber\\
&&+g(X,Z)\bar{S}(Y,P)-g(Y,P)\bar{S}(X,Z)+g(X,Y)\bar{S}(Z,P)-g(Z,P)\bar{S}(X,Y)\nonumber
\end{eqnarray}
which derivates the following:
\begin{thm}\label{thm41}
If the generator $P$ is unit  Killing, $(M^{n},\nabla)$ is Ricci symmetric if and only if 
\begin{eqnarray*}
(\bar{\nabla}_{X}\hat{S})(Y,Z)=g(X,Z)\bar{S}(Y,P)-g(Y,P)\bar{S}(X,Z)+g(X,Y)\bar{S}(Z,P)-g(Z,P)\bar{S}(X,Y).
\end{eqnarray*}
\end{thm}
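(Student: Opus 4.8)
The plan is to obtain the statement directly from the already-derived identity (\ref{41}); essentially no new computation is needed, because the whole substance of the result is packaged in that equation. In its final form (\ref{41}) reads
\begin{eqnarray*}
(\bar{\nabla}_{X}\hat{S})(Y,Z)&=&(\nabla_{X}S)(Y,Z)+g(X,Z)\bar{S}(Y,P)-g(Y,P)\bar{S}(X,Z)\\
&&+g(X,Y)\bar{S}(Z,P)-g(Z,P)\bar{S}(X,Y),
\end{eqnarray*}
so the four-term expression appearing in the theorem is exactly $(\bar{\nabla}_{X}\hat{S})(Y,Z)-(\nabla_{X}S)(Y,Z)$.

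First I would prove the forward implication: if $(M^{n},\nabla)$ is Ricci symmetric, i.e. $(\nabla_{X}S)(Y,Z)=0$ for all $X,Y,Z\in TM$, then substituting into (\ref{41}) annihilates the first summand and leaves precisely the asserted formula. Conversely, assuming the displayed identity holds, I would subtract it from (\ref{41}); the two four-term correction expressions are literally the same and cancel, forcing $(\nabla_{X}S)(Y,Z)=0$ for all vector fields, which is the defining condition of Ricci symmetry of $(M^{n},\nabla)$.

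The only point meriting attention --- and the closest thing to an obstacle --- is to confirm that the correction terms carry no concealed dependence on $\nabla S$, so that the cancellation above is genuine and the equivalence is truly two-sided. This is clear: for a unit Killing generator one has $\bar{S}(Y,P)=S(Y,P)$, and each $\bar{S}$ in the correction is built only from $g$, $\pi$ and the \emph{undifferentiated} Ricci tensor $S$; hence no derivative of $S$ is hidden in those terms, the single occurrence of $\nabla S$ is the explicit summand $(\nabla_{X}S)(Y,Z)$, and the biconditional follows at once. In short, the real work lies in establishing (\ref{41}), and the theorem is its immediate corollary.
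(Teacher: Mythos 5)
Your proposal is correct and follows exactly the paper's route: the paper derives identity (\ref{41}) immediately before the theorem and treats the equivalence as an instant consequence, just as you do. Your added remark that the correction terms involve only the undifferentiated Ricci tensor (so the biconditional is genuine) is a sensible sanity check but does not change the argument.
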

\begin{corol}
Let $P$ be a unit parallel vector field, then $(M^{n},\nabla)$ is Ricci symmetric if and only if
\begin{eqnarray*}
(\bar{\nabla}_{X}\hat{S})(Y,Z)=-\pi(Y)\bar{S}(X,Z)-\pi(Z)\bar{S}(X,Y).
\end{eqnarray*}
\end{corol}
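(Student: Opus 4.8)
The plan is to obtain this as a direct specialization of Theorem \ref{thm41}. The first thing I would note is that a unit parallel vector field is automatically Killing: if $\nabla_X P = 0$ for every $X$, then $g(\nabla_X P, Y) + g(\nabla_Y P, X) = 0$ holds trivially, so the hypothesis of Theorem \ref{thm41} is satisfied and the identity
\[
(\bar{\nabla}_X \hat{S})(Y,Z) = g(X,Z)\bar{S}(Y,P) - g(Y,P)\bar{S}(X,Z) + g(X,Y)\bar{S}(Z,P) - g(Z,P)\bar{S}(X,Y)
\]
is available, being equivalent to the Ricci symmetry of $(M^n,\nabla)$.

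Next I would reduce the right-hand side using the identities recorded for a unit parallel generator in Section 2. Two of them are decisive: $\bar{S}(Y,P) = S(Y,P) = 0$ and the defining relation $g(Y,P) = \pi(Y)$. The first annihilates the terms $g(X,Z)\bar{S}(Y,P)$ and $g(X,Y)\bar{S}(Z,P)$ outright, while the second rewrites $g(Y,P)$ and $g(Z,P)$ as $\pi(Y)$ and $\pi(Z)$. What survives is exactly $-\pi(Y)\bar{S}(X,Z) - \pi(Z)\bar{S}(X,Y)$, the asserted form, and the biconditional with Ricci symmetry is inherited verbatim from Theorem \ref{thm41}.

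There is essentially no deep obstacle here; the corollary is bookkeeping on top of the theorem. The one point requiring care is to invoke the parallel specialization of the Section 2 identities (the block containing $AX = -\pi(X)P + \tfrac{1}{2}X$) rather than the merely-Killing list, and in particular to observe that $\hat{S} = \bar{S}$ in the parallel case (as remarked following Theorem \ref{thm31}), so that the symmetrized and unsymmetrized Ricci tensors may be used interchangeably on both sides of the claimed identity.

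As a self-contained alternative, I could instead derive the formula from scratch by expanding $(\bar{\nabla}_X \hat{S})(Y,Z)$ directly from $\bar{\nabla}_X Y = \nabla_X Y + \pi(Y)X - g(X,Y)P$, using $\nabla_X P = 0$ to discard every term carrying a covariant derivative of $P$ and again $\bar{S}(\cdot,P)=0$ to clear the residual $g(X,Y)\bar{S}(P,Z)$-type contributions; this recovers
\[
(\bar{\nabla}_X \hat{S})(Y,Z) = (\nabla_X S)(Y,Z) - \pi(Y)\bar{S}(X,Z) - \pi(Z)\bar{S}(X,Y),
\]
after which the equivalence with $\nabla S = 0$ is transparent. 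I would present the specialization route as the main argument, since it is the shortest and leans cleanly on the theorem just proved.
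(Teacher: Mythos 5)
Your specialization argument is exactly the intended one: the paper states this as an immediate corollary of Theorem \ref{thm41}, obtained by noting that a unit parallel $P$ is Killing and then using the Section 2 identities $\bar{S}(Y,P)=S(Y,P)=0$ and $g(Y,P)=\pi(Y)$ to reduce the right-hand side. Your proof is correct and matches the paper's (unwritten but evident) route.
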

Alternately, for a specific form of the Ricci curvature $S(X,Y)$ we have the following:
\begin{thm}\label{thm42}
Let $S(Y,Z)=(n-2)g(Y,Z)-(n-2)\pi(Y)\pi(Z),$ $\|P\|=1$. If the manifold $(M^{n},\nabla)$ is Ricci symmetric then $(M^{n},\bar{\nabla})$ is $\bar{\nabla}$-Ricci symmetric.
\end{thm}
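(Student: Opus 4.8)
The plan is to show that, under the stated hypotheses, the generator $P$ is forced to be $\nabla$-parallel; once this is established, both $\bar S$ and its symmetric part $\hat S$ vanish identically, and the desired conclusion $\bar\nabla\hat S=0$ follows at once. Thus the whole argument reduces to upgrading the assumption ``$(M^n,\nabla)$ is Ricci symmetric'' to ``$P$ is parallel'', by exploiting the very rigid algebraic form prescribed for $S$ together with the normalization $\|P\|=1$.

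First I would differentiate the prescribed Ricci tensor. Since $\nabla g=0$ and $\pi(Y)=g(P,Y)$ gives $(\nabla_X\pi)(Y)=g(\nabla_XP,Y)$, the form $S(Y,Z)=(n-2)g(Y,Z)-(n-2)\pi(Y)\pi(Z)$ yields
\[
(\nabla_X S)(Y,Z)=-(n-2)\big[(\nabla_X\pi)(Y)\pi(Z)+\pi(Y)(\nabla_X\pi)(Z)\big].
\]
Ricci symmetry, $\nabla S=0$, makes the right-hand side vanish for all $X,Y,Z$. The key manipulation is to put $Z=P$: using $\pi(P)=\|P\|^2=1$ together with the identity $(\nabla_X\pi)(P)=g(\nabla_XP,P)=\tfrac12 X\|P\|^2=0$, the bracket collapses to $(\nabla_X\pi)(Y)$, so that $(n-2)(\nabla_X\pi)(Y)=0$. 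As $n\geq 3$, this forces $(\nabla_X\pi)(Y)=0$ for all $X,Y$, i.e. $\nabla P=0$.

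It then remains to read off $\bar S$ in the parallel case. Substituting $\nabla P=0$---which in particular annihilates the term $\sum_i g(\nabla_iP,e_i)$---and $\pi(P)=1$ into the general expression (\ref{24}) gives $\bar S(Y,Z)=S(Y,Z)+(n-2)\pi(Y)\pi(Z)-(n-2)g(Y,Z)$, and inserting the prescribed $S$ cancels the right-hand side term by term, so $\bar S\equiv 0$ and hence the symmetric part $\hat S\equiv 0$. Since the covariant derivative of the zero tensor is zero, $\bar\nabla\hat S=0$, which is precisely $\bar\nabla$-Ricci symmetry. Equivalently, one may invoke the preceding corollary: for a unit parallel $P$ its right-hand side $-\pi(Y)\bar S(X,Z)-\pi(Z)\bar S(X,Y)$ vanishes once $\bar S=0$.

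I expect the substitution $Z=P$ in the parallelization step to be the only genuine idea; it is where the special shape of $S$, the unit normalization, and Ricci symmetry interlock. Everything afterwards---evaluating $\bar S$ and differentiating the zero tensor---is routine bookkeeping. The one point to keep honest is the identity $(\nabla_X\pi)(P)=0$, which must be derived from $\|P\|=1$ rather than taken for granted, since it is exactly what prevents the second summand in the bracket from obstructing the conclusion $\nabla P=0$.
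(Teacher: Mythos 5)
Your proof is correct and follows essentially the same route as the paper: both arguments use the prescribed form of $S$ together with $\nabla S=0$ to force $\nabla P=0$ (you make explicit the $Z=P$ substitution that the paper leaves implicit), and then conclude that the semi-symmetric Ricci tensor degenerates so that $\bar\nabla\hat S=0$. The only cosmetic difference is that you observe $\hat S\equiv 0$ directly from (\ref{24}), while the paper routes the final step through (\ref{41}); the content is the same.
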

\begin{proof}
Let $S(Y,Z)=(n-2)g(Y,Z)-(n-2)\pi(Y)\pi(Z)$ and $(\nabla_xS)(Y,Z)=0$. These together imply $(\nabla_X\pi)(Y)\pi(Z)+(\nabla_X\pi)(Z)\pi(Y)=0$ which means $\nabla P=0$ for a unit vector field $P$. Hence, $\bar{S}(Y,P)=0$. Therefore, using (\ref{41}) we conclude
\begin{eqnarray*}
(\bar{\nabla}_{X}\hat{S})(Y,Z)=(\nabla_{X}S)(Y,Z)&-&\pi(Y)[S(X,Z)+(n-2)\pi(X)\pi(Z)-(n-2)g(X,Z)]\\
&-&\pi(Z)[S(Y,X)+(n-2)\pi(X)\pi(Y)-(n-2)g(X,Y)]
\end{eqnarray*}
Hence the result.
\end{proof}

\begin{thm}\label{thm}
In general, if $(M^{n},\bar{\nabla})$ is a $\bar{\nabla}$-Ricci symmetric $S(QE^{n})$ manifold, $\hat{S}(X,Y)=ag(X,Y)+b\eta(X)\eta(Y)$ with a unit vector $\xi$ associated to the one-form $\eta$, then $a+b=constant$.
\end{thm}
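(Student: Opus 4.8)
The plan is to combine the $\bar\nabla$-Ricci symmetric hypothesis $\bar\nabla\hat S=0$ with the metric compatibility of the semi-symmetric metric connection, and then to contract the resulting tensor identity twice against the unit generator $\xi$. First I would verify (or invoke) that $\bar\nabla g=0$: expanding $(\bar\nabla_Xg)(Y,Z)$ from the defining formula $\bar\nabla_XY=\nabla_XY+\pi(Y)X-g(X,Y)P$ and using $\nabla g=0$, the extra terms cancel in pairs, so $\bar\nabla$ is indeed metric. This is the structural fact I will lean on throughout.

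Next I would differentiate the $S(QE^n)$ form $\hat S=ag+b\,\eta\otimes\eta$. Because $\bar\nabla g=0$, the $ag$ piece contributes only $(Xa)\,g(Y,Z)$, while the product rule applied to $b\,\eta\otimes\eta$ yields $(Xb)\eta(Y)\eta(Z)+b[(\bar\nabla_X\eta)(Y)\eta(Z)+\eta(Y)(\bar\nabla_X\eta)(Z)]$. Setting $(\bar\nabla_X\hat S)(Y,Z)=0$ gives the working identity
\begin{equation*}
(Xa)\,g(Y,Z)+(Xb)\,\eta(Y)\eta(Z)+b\big[(\bar\nabla_X\eta)(Y)\eta(Z)+\eta(Y)(\bar\nabla_X\eta)(Z)\big]=0.
\end{equation*}

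The unit hypothesis $\|\xi\|=1$ now enters. Since $\eta(Y)=g(Y,\xi)$ and $\bar\nabla g=0$, one gets $(\bar\nabla_X\eta)(Y)=g(Y,\bar\nabla_X\xi)$; and differentiating $g(\xi,\xi)=1$ gives $g(\bar\nabla_X\xi,\xi)=0$, so that $(\bar\nabla_X\eta)(\xi)=0$. Substituting $Y=Z=\xi$ into the working identity and using $g(\xi,\xi)=\eta(\xi)=1$, every term carrying a factor $(\bar\nabla_X\eta)(\xi)$ drops out, leaving $Xa+Xb=X(a+b)=0$. As $X$ is arbitrary this forces $a+b$ to be constant, which is the claim.

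The computation is elementary, so the only real care-points are the two structural facts underpinning it: that the semi-symmetric metric connection is genuinely compatible with $g$, and that the generator $\xi$ is $\bar\nabla$-unit so that $(\bar\nabla_X\eta)(\xi)=0$. I do not anticipate a substantive obstacle beyond tracking these; the decisive move is simply choosing the right double contraction ($Y=Z=\xi$) that annihilates the $b$-dependent derivative terms and isolates $X(a+b)$.
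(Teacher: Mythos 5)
Your proposal is correct and follows essentially the same route as the paper: differentiate $\hat S=ag+b\,\eta\otimes\eta$ using $\bar\nabla g=0$, impose $\bar\nabla\hat S=0$, and contract with $Y=Z=\xi$ so that the $b$-terms vanish via $(\bar\nabla_X\eta)(\xi)=0$, leaving $X(a+b)=0$. You merely make explicit two steps the paper leaves implicit (metric compatibility of $\bar\nabla$ and the vanishing of $(\bar\nabla_X\eta)(\xi)$), which is a welcome clarification but not a different argument.
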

\begin{proof}
We have
\begin{eqnarray}\label{42}
(\bar{\nabla}_{X}\hat{S})(Y,Z)&=&X(a)g(Y,Z)+X(b)\eta(Y)\eta(Z)\nonumber\\
&&+b[(\bar\nabla_{X}\eta)(Y)\eta(Z)+(\bar\nabla_{X}\eta)(Z)\eta(Y)].
\end{eqnarray}
Putting $Y=Z=\xi$ in (\ref{42}), we finish the proof.
\end{proof}\begin{thm}\label{thm44}
Let the generator $P$ in $(M^{n},\bar{\nabla})$ be a unit parallel vector field and $\bar{R}\cdot\bar{S}=-Q(g,\bar{S})$, if $(M^{n},\nabla)$ is Ricci semi-symmetric then $M$ is a quasi-Einstein manifold of the form $$S(X,Y)=(n-2)g(X,Y)-(n-2)\pi(X)\pi(Y).$$
Conversely, if the Ricci curvature satisfies $S(X,Y)=(n-2)g(X,Y)-(n-2)\pi(X)\pi(Y)$, then $M$ is Ricci semi-symmetric.
\end{thm}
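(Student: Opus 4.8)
The plan is to exploit that, for a unit parallel generator $P$, the bar-curvature $\bar R$ differs from the Levi-Civita curvature $R$ by a purely algebraic tensor, together with the two structural identities $\bar S(\cdot,P)=0$ and $R(X,Y)P=0$ recorded in Section~2 for such a field (recall also that here $\hat S=\bar S$). First I would substitute $AX=-\pi(X)P+\frac{1}{2}X$ into the expression for $\bar R$ from Section~2 to obtain
$$\bar R(X,Y)Z=R(X,Y)Z+\big(g(X,Z)-\pi(X)\pi(Z)\big)Y-\big(g(Y,Z)-\pi(Y)\pi(Z)\big)X+\big(\pi(X)g(Y,Z)-\pi(Y)g(X,Z)\big)P.$$
Writing the hypothesis $\bar R\cdot\bar S=-Q(g,\bar S)$ out with the definitions of the two operations shows that it is equivalent to saying that the operator $\tilde R(X,Y):=\bar R(X,Y)+(X\wedge_g Y)$ annihilates $\bar S$ as a derivation, i.e. $\bar S(\tilde R(X,Y)W_1,W_2)+\bar S(W_1,\tilde R(X,Y)W_2)=0$; the minus sign in $-Q(g,\bar S)$ is exactly what produces the sum $\bar R+(X\wedge_g Y)$. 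Adding $(X\wedge_g Y)Z=g(Y,Z)X-g(X,Z)Y$ cancels the metric terms above and leaves $\tilde R(X,Y)Z=R(X,Y)Z+C(X,Y)Z$, where $C(X,Y)Z=-\pi(X)\pi(Z)Y+\pi(Y)\pi(Z)X+\big(\pi(X)g(Y,Z)-\pi(Y)g(X,Z)\big)P$.

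For the forward implication I would split this derivation along $\tilde R=R+C$. The $R$-part is exactly $-(R\cdot\bar S)$. Since $\bar S=S+(n-2)\,\pi\otimes\pi-(n-2)\,g$, linearity gives $R\cdot\bar S=R\cdot S+(n-2)\,R\cdot(\pi\otimes\pi)-(n-2)\,R\cdot g$; here $R\cdot g=0$ identically, and $R\cdot(\pi\otimes\pi)=0$ because $\pi(R(X,Y)W)=g(R(X,Y)W,P)=-g(W,R(X,Y)P)=0$ for parallel $P$, so the Ricci semi-symmetry hypothesis $R\cdot S=0$ forces $R\cdot\bar S=0$. The whole condition therefore collapses to the algebraic identity $\bar S(C(X,Y)W_1,W_2)+\bar S(W_1,C(X,Y)W_2)=0$. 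In it the $P$-components of $C$ are killed by $\bar S(\cdot,P)=0$, leaving a sum of terms of the type $\pi(X)\pi(W_1)\bar S(Y,W_2)$. Specialising $Y=P$ (so two more terms vanish) gives $\pi(W_1)\bar S(X,W_2)+\pi(W_2)\bar S(X,W_1)=0$, and then $W_1=P$ forces $\bar S\equiv0$. Because $\bar S=S+(n-2)\,\pi\otimes\pi-(n-2)\,g$, this is precisely $S(X,Y)=(n-2)g(X,Y)-(n-2)\pi(X)\pi(Y)$, i.e. the asserted quasi-Einstein form.

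The converse is a short verification. Assuming $S(X,Y)=(n-2)\big(g(X,Y)-\pi(X)\pi(Y)\big)$, I would expand $(R(X,Y)\cdot S)(W_1,W_2)=-S(R(X,Y)W_1,W_2)-S(W_1,R(X,Y)W_2)$; the metric contributions combine to $-(n-2)\big(R(X,Y,W_1,W_2)+R(X,Y,W_2,W_1)\big)=0$ by the skew-symmetry of $R$ in its last two slots, while the $\pi\otimes\pi$ contributions involve only $\pi(R(X,Y)W_i)=g(R(X,Y)W_i,P)$, which vanish again because $R(X,Y)P=0$. Hence $R\cdot S=0$ and $M$ is Ricci semi-symmetric.

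The \emph{main obstacle} is the bookkeeping in the first step: one has to form $\bar R(X,Y)+(X\wedge_g Y)$ correctly and recognise that the Levi-Civita part decouples as $-(R\cdot\bar S)$. Once the two facts $\bar S(\cdot,P)=0$ and $R(X,Y)P=0$ (equivalently $\pi\circ R=0$) are in place, both directions reduce to short substitutions, the only care being to check that every $P$-term is annihilated by $\bar S(\cdot,P)=0$.
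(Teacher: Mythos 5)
Your proof is correct and follows essentially the same route as the paper: both expand $\bar{R}\cdot\bar{S}$ into $R\cdot S$ plus algebraic correction terms coming from $A$, invoke $\bar{S}(\cdot,P)=0$ and $R(X,Y)P=0$ for a unit parallel $P$, and conclude by evaluating on $P$. Your bookkeeping --- recasting the hypothesis as the derivation $\bar{R}(X,Y)+(X\wedge_{g}Y)$ annihilating $\bar{S}$, noting $R\cdot\bar{S}=R\cdot S$, and observing that the conclusion is exactly $\bar{S}\equiv 0$ --- is just a tidier packaging of the paper's computation.
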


\begin{proof}
we have
\begin{eqnarray*}
-(\bar{R}(X,Y)\cdot\bar{S})(Z,W)&=&\bar{S}(\bar{R}(X,Y)Z,W)+\bar{S}(Z,\bar{R}(X,Y)W)\\
&=&S(R(X,Y)Z,W)+S(Z,R(X,Y)W)\\
&&+g(AX,Z)S(Y,W)+g(AX,W)S(Y,Z)\\
&&-g(AY,Z)S(X,W)-g(AY,W)S(X,Z)\\
&&+g(X,Z)S(AY,W)+g(X,W)S(AY,Z)\\
&&-g(Y,Z)S(AX,W)-g(Y,W)S(AX,Z)\\
&&+(n-2)[\pi(W)\bar{R}(X,Y,Z,P)+\pi(Z)\bar{R}(X,Y,W,P)]\\
&=&-(R(X,Y)\cdot S)(Z,W)+Q(g,S)(Z,W;X,Y)\\
&&+\pi(Y)\pi(W)S(X,Z)-\pi(X)\pi(W)S(Y,Z)\\
&&+\pi(Y)\pi(Z)S(X,W)-\pi(X)\pi(Z)S(Y,W)\\
&=&-(R(X,Y)\cdot S)(Z,W)+Q(g,\bar{S})(Z,W;X,Y)\\
&&+[S(X,Z)-(n-2)g(X,Z)]\pi(Y)\pi(W)\\
&&-[S(Y,Z)-(n-2)g(Y,Z)]\pi(X)\pi(W)\\
&&+[S(X,W)-(n-2)g(X,W)]\pi(Y)\pi(Z)\\
&&-[S(Y,W)-(n-2)g(Y,W)]\pi(X)\pi(Z),
\end{eqnarray*}
where the last equality follows from 
\begin{eqnarray*}
Q(g,\bar{S})(Z,W;X,Y)&=&-\bar{S}((X\wedge Y)Z,W)-\bar{S}(Z,(X\wedge Y)W)\\
&=&Q(g,S)(Z,W;X,Y)+(n-2)[g(X,Z)\pi(Y)\pi(W)\\
&&-g(Y,Z)\pi(X)\pi(W)+g(X,W)\pi(Y)\pi(Z)\\
&&-g(Y,W)\pi(X)\pi(Z)].
\end{eqnarray*}
Since $\bar{R}\cdot\bar{S}=-Q(g,\bar{S})$, we have
\begin{eqnarray}
(R(X,Y)\cdot S)(Z,W)&=&[S(X,Z)-(n-2)g(X,Z)]\pi(Y)\pi(W)\notag\\
&-&[S(Y,Z)-(n-2)g(Y,Z)]\pi(X)\pi(W)\notag\\
&+&[S(X,W)-(n-2)g(X,W)]\pi(Y)\pi(Z)\notag\\
&-&[S(Y,W)-(n-2)g(Y,W)]\pi(X)\pi(Z).
\end{eqnarray}
Since $R\cdot S=0$, using $Y=W=P$ in the above equation we get the result. The converse part is quite obvious at this point.

\end{proof}

%\begin{corol}\label{corol}Let $(M^{n},\bar{\nabla})$ be a $S(QE^{n})$ manifold with a unit parallel generator $P$, then $(M^{n},\bar{\nabla})$ is Ricci semi-symmetric.\end{corol}

\section{ Some examples}

\begin{examp}
The first example we consider is the popular four dimensional Schwarzschild spacetime $M$ with the Ricci-flat metric \[g = \left( \frac{2 \, m}{r} - 1 \right) \mathrm{d} t\otimes \mathrm{d} t + \left( -\frac{1}{\frac{2 \, m}{r} - 1} \right) \mathrm{d} r\otimes \mathrm{d} r + r^{2} \mathrm{d} {\theta}\otimes \mathrm{d} {\theta} + r^{2} \sin\left({\theta}\right)^{2} \mathrm{d} {\phi}\otimes \mathrm{d} {\phi},\]
and a Killing vector $\frac{\partial}{\partial t}$ in this spacetime.
\end{examp}

The corresponding nontrivial Levi-Civita connection components are:
\begin{eqnarray*}
\begin{cases}
\Gamma_{ \phantom{\, t} \, t \, r }^{ \, t \phantom{\, t} \phantom{\, r} } = -\frac{m}{2 \, m r - r^{2}} , \Gamma_{ \phantom{\, r} \, t \, t }^{ \, r \phantom{\, t} \phantom{\, t} } = -\frac{2 \, m^{2} - m r}{r^{3}} , \Gamma_{ \phantom{\, r} \, r \, r }^{ \, r \phantom{\, r} \phantom{\, r} }  =  \frac{m}{2 \, m r - r^{2}} , \Gamma_{ \phantom{\, r} \, {\theta} \, {\theta} }^{ \, r \phantom{\, {\theta}} \phantom{\, {\theta}} }= 2 \, m - r , \Gamma_{ \phantom{\, {\phi}} \, {\theta} \, {\phi} }^{ \, {\phi} \phantom{\, {\theta}} \phantom{\, {\phi}} } = \frac{\cos\left({\theta}\right)}{\sin\left({\theta}\right)},\\ \Gamma_{ \phantom{\, r} \, {\phi} \, {\phi} }^{ \, r \phantom{\, {\phi}} \phantom{\, {\phi}} } = {\left(2 \, m - r\right)} \sin\left({\theta}\right)^{2} , \Gamma_{ \phantom{\, {\theta}} \, r \, {\theta} }^{ \, {\theta} \phantom{\, r} \phantom{\, {\theta}} } = \frac{1}{r} , \Gamma_{ \phantom{\, {\theta}} \, {\phi} \, {\phi} }^{ \, {\theta} \phantom{\, {\phi}} \phantom{\, {\phi}} } = -\cos\left({\theta}\right) \sin\left({\theta}\right) , \Gamma_{ \phantom{\, {\phi}} \, r \, {\phi} }^{ \, {\phi} \phantom{\, r} \phantom{\, {\phi}} } = \frac{1}{r} . 
\end{cases}
\end{eqnarray*}

%The one-form $\pi$ associated to the Killing vector $\frac{\partial}{\partial t}$ is given by $(\frac{2m-r}{r})dt$.

We define a semi-symmetric metric connection corresponding to the one-form $\pi=( \frac{2m - r}{r})dt$ associated to the Killing vector $\frac{\partial}{\partial t}$ by
\begin{equation*}
\bar{\Gamma}_{ij}^{k}=\Gamma_{ij}^{k}+\pi_{j}\delta_{i}^{k}-g_{ij}\pi^{k},
\pi^{k}=g^{ik}\pi_{i}.
\end{equation*}
We calculate the nontrivial components as:
\begin{eqnarray*} 
\begin{cases}\bar{\Gamma}_{ \phantom{\, t} \, t \, r }^{ \, t \phantom{\, t} \phantom{\, r} }  =  -\frac{m}{2 \, m r - r^{2}}, \bar{\Gamma}_{ \phantom{\, t} \, r \, t }^{ \, t \phantom{\, r} \phantom{\, t} }  =  -\frac{m}{2 \, m r - r^{2}}, \bar{\Gamma}_{ \phantom{\, t} \, r \, r }^{ \, t \phantom{\, r} \phantom{\, r} }  =  \frac{r}{2 \, m - r}, \bar{\Gamma}_{ \phantom{\, t} \, {\theta} \, {\theta} }^{ \, t \phantom{\, {\theta}} \phantom{\, {\theta}} }  =  -r^{2}, \bar{\Gamma}_{ \phantom{\, t} \, {\phi} \, {\phi} }^{ \, t \phantom{\, {\phi}} \phantom{\, {\phi}} } = -r^{2} \sin\left({\theta}\right)^{2},\\
 \bar{\Gamma}_{ \phantom{\, r} \, t \, t }^{ \, r \phantom{\, t} \phantom{\, t} }  =  -\frac{2 \, m^{2} - m r}{r^{3}} , \bar{\Gamma}_{ \phantom{\, r} \, r \, t }^{ \, r \phantom{\, r} \phantom{\, t} }  =  \frac{2 \, m - r}{r} ,\bar{\Gamma}_{ \phantom{\, r} \, r \, r }^{ \, r \phantom{\, r} \phantom{\, r} }  =  \frac{m}{2 \, m r - r^{2}} , \bar{\Gamma}_{ \phantom{\, r} \, {\theta} \, {\theta} }^{ \, r \phantom{\, {\theta}} \phantom{\, {\theta}} }  =  2 \, m - r , \bar{\Gamma}_{ \phantom{\, r} \, {\phi} \, {\phi} }^{ \, r \phantom{\, {\phi}} \phantom{\, {\phi}} }  =  {\left(2 \, m - r\right)} \sin\left({\theta}\right)^{2}, \\ 
 \bar{\Gamma}_{ \phantom{\, {\theta}} \, r \, {\theta} }^{ \, {\theta} \phantom{\, r} \phantom{\, {\theta}} }  =  \frac{1}{r} , \bar{\Gamma}_{ \phantom{\, {\theta}} \, {\theta} \, t }^{ \, {\theta} \phantom{\, {\theta}} \phantom{\, t} }  =  \frac{2 \, m - r}{r} , \bar{\Gamma}_{ \phantom{\, {\theta}} \, {\theta} \, r }^{ \, {\theta} \phantom{\, {\theta}} \phantom{\, r} }  =  \frac{1}{r} , \bar{\Gamma}_{ \phantom{\, {\theta}} \, {\phi} \, {\phi} }^{ \, {\theta} \phantom{\, {\phi}} \phantom{\, {\phi}} }  =  -\cos\left({\theta}\right) \sin\left({\theta}\right) , \bar{\Gamma}_{ \phantom{\, {\phi}} \, r \, {\phi} }^{ \, {\phi} \phantom{\, r} \phantom{\, {\phi}} } =  \frac{1}{r} , \bar{\Gamma}_{ \phantom{\, {\phi}} \, {\theta} \, {\phi} }^{ \, {\phi} \phantom{\, {\theta}} \phantom{\, {\phi}} } = \frac{\cos\left({\theta}\right)}{\sin\left({\theta}\right)} ,\\
  \bar{\Gamma}_{ \phantom{\, {\phi}} \, {\phi} \, t }^{ \, {\phi} \phantom{\, {\phi}} \phantom{\, t} }  =  \frac{2 \, m - r}{r} , \bar{\Gamma}_{ \phantom{\, {\phi}} \, {\phi} \, r }^{ \, {\phi} \phantom{\, {\phi}} \phantom{\, r} }  =  \frac{1}{r} , \bar{\Gamma}_{ \phantom{\, {\phi}} \, {\phi} \, {\theta} }^{ \, {\phi} \phantom{\, {\phi}} \phantom{\, {\theta}} }  =  \frac{\cos\left({\theta}\right)}{\sin\left({\theta}\right)}. 
\end{cases}
\end{eqnarray*}

The corresponding Ricci curvature components $\bar{S}_{ij}$ are:
\begin{center}
$\left(\begin{array}{rrrr}
0 & -\frac{m}{r^{2}} & 0 & 0 \\
\frac{m}{r^{2}} & 1 & 0 & 0 \\
0 & 0 & r^{2}-2 m r  & 0 \\
0 & 0 & 0 & -{\left(2 \, m r - r^{2}\right)} \sin\left({\theta}\right)^{2}
\end{array}\right)$
\end{center}

For $a=\frac{r-2m}{r}$ and $b=1$, we check that:
\begin{eqnarray*}
0=\hat{S}_{tt} &=& ag_{tt}+b\pi_t\pi_t\\
1=\hat{S}_{rr} &=& ag_{rr}+b\pi_r\pi_r\\
r^2-2mr=\hat{S}_{\theta\theta} &=& ag_{\theta\theta}+b\pi_\theta\pi_\theta\\
-(2mr-r^2)\sin^2\theta=\hat{S}_{\phi\phi} &=& ag_{\phi\phi}+b\pi_\phi\pi_\phi
\end{eqnarray*}
Hence, $(M,\bar{\nabla})$ is a $S(QE^n)$ manifold.

%%%%%%%%%%%%%%%%%%%%%%%%%%%%%%%%%%%%%%%%%%%%%%%%%%%%%%
%%%%%%%%%%%%%%%%%%%%%%%%%%%%%%%%%%%%%%%%%%%%%%%%%%%%%%
\begin{examp}
We can further consider more complicated 4-dimenisonal Kottler spacetime with metric \cite{kottler}
\begin{equation*}
g = \left( \frac{1}{3} \, \Lambda r^{2} + \frac{2 \, m}{r} - 1 \right) \mathrm{d} t\otimes \mathrm{d} t + \left( -\frac{3}{\Lambda r^{2} + \frac{6 \, m}{r} - 3} \right) \mathrm{d} r\otimes \mathrm{d} r + r^{2} \mathrm{d} {\theta}\otimes \mathrm{d} {\theta} + r^{2} \sin\left({\theta}\right)^{2} \mathrm{d} {\phi}\otimes \mathrm{d} {\phi}
\end{equation*}
as an example of an Einstein space carrying a $S(QE^n)$  structure with a Killing generator. This spacetime satisfies  Einstein's  field  equations  of  general  relativity  with  positive  cosmological  constant for a  vacuum space around a center of spherical symmetry. The manifold is also called the Schwarzschild-de Sitter spacetime and provides us with a two-parameter family of static spacetime with compact spacelike slices, which are locally (but not globally) conformally flat. The Levi-Civita components of this metric are:
\begin{eqnarray*} 
\begin{cases}\Gamma_{ \phantom{\, t} \, t \, r }^{ \, t \phantom{\, t} \phantom{\, r} } =  \frac{\Lambda r^{3} - 3 \, m}{\Lambda r^{4} + 6 \, m r - 3 \, r^{2}}, \Gamma_{ \phantom{\, r} \, t \, t }^{ \, r \phantom{\, t} \phantom{\, t} }  =  \frac{\Lambda^{2} r^{6} + 3 \, \Lambda m r^{3} - 3 \, \Lambda r^{4} - 18 \, m^{2} + 9 \, m r}{9 \, r^{3}},  \Gamma_{ \phantom{\, r} \, r \, r }^{ \, r \phantom{\, r} \phantom{\, r} } =  -\frac{\Lambda r^{3} - 3 \, m}{\Lambda r^{4} + 6 \, m r - 3 \, r^{2}}\\ \Gamma_{ \phantom{\, r} \, {\theta} \, {\theta} }^{ \, r \phantom{\, {\theta}} \phantom{\, {\theta}} }  =  \frac{1}{3} \, \Lambda r^{3} + 2 \, m - r, \Gamma_{ \phantom{\, r} \, {\phi} \, {\phi} }^{ \, r \phantom{\, {\phi}} \phantom{\, {\phi}} } = \frac{1}{3} \, {\left(\Lambda r^{3} + 6 \, m - 3 \, r\right)} \sin\left({\theta}\right)^{2}, \Gamma_{ \phantom{\, {\theta}} \, r \, {\theta} }^{ \, {\theta} \phantom{\, r} \phantom{\, {\theta}} } =\frac{1}{r}\\ \Gamma_{ \phantom{\, {\theta}} \, {\phi} \, {\phi} }^{ \, {\theta} \phantom{\, {\phi}} \phantom{\, {\phi}} }  =  -\cos\left({\theta}\right) \sin\left({\theta}\right), \Gamma_{ \phantom{\, {\phi}} \, r \, {\phi} }^{ \, {\phi} \phantom{\, r} \phantom{\, {\phi}} } =  \frac{1}{r}, \Gamma_{ \phantom{\, {\phi}} \, {\theta} \, {\phi} }^{ \, {\phi} \phantom{\, {\theta}} \phantom{\, {\phi}} } = \frac{\cos\left({\theta}\right)}{\sin\left({\theta}\right)} 
\end{cases}
\end{eqnarray*}
The Ricci tensor $S_{ij}$ is calculated as 
\begin{center}
$\left(\begin{array}{rrrr}
-\frac{\Lambda^{2} r^{3} + 6 \, \Lambda m - 3 \, \Lambda r}{3 \, r} & 0 & 0 & 0 \\
0 & \frac{3 \, \Lambda r}{\Lambda r^{3} + 6 \, m - 3 \, r} & 0 & 0 \\
0 & 0 & -\Lambda r^{2} & 0 \\
0 & 0 & 0 & -\Lambda r^{2} \sin\left({\theta}\right)^{2}
\end{array}\right)$
\end{center}
Clearly $S_{ij}=-\Lambda g_{ij}$ in this spacetime. Hence it is an Einstein space.

The semi-symmetric metric connection corresponding to the one-form $\pi = (\frac{\Lambda r^{3} + 6 \, m - 3 \, r}{3 \, r})dt$ associated to the Killing vector $\frac{\partial}{\partial t}$ is given by 
\begin{equation*}
\bar{\Gamma}_{ij}^{k}=\Gamma_{ij}^{k}+\pi_{j}\delta_{i}^{k}-g_{ij}\pi^{k},
\pi^{k}=g^{ik}\pi_{i}.
\end{equation*}
We calculate its nontrivial components as:
\begin{eqnarray*} 
\begin{cases}
\bar{\Gamma}_{ \phantom{\, t} \, t \, r }^{ \, t \phantom{\, t} \phantom{\, r} } = \frac{\Lambda r^{3} - 3 \, m}{\Lambda r^{4} + 6 \, m r - 3 \, r^{2}}, \bar{ \Gamma}_{ \phantom{\, t} \, r \, t }^{ \, t \phantom{\, r} \phantom{\, t} }= \frac{\Lambda r^{3} - 3 \, m}{\Lambda r^{4} + 6 \, m r - 3 \, r^{2}}, \bar{\Gamma}_{ \phantom{\, t} \, r \, r }^{ \, t \phantom{\, r} \phantom{\, r} } = \frac{3 \, r}{\Lambda r^{3} + 6 \, m - 3 \, r}, \bar{\Gamma}_{ \phantom{\, t} \, {\theta} \, {\theta} }^{ \, t \phantom{\, {\theta}} \phantom{\, {\theta}} } =-r^{2}, \bar{\Gamma}_{ \phantom{\, t} \, {\phi} \, {\phi} }^{ \, t \phantom{\, {\phi}} \phantom{\, {\phi}} } = -r^{2} \sin\left({\theta}\right)^{2} \\ \bar{\Gamma}_{ \phantom{\, r} \, t \, t }^{ \, r \phantom{\, t} \phantom{\, t} } = \frac{\Lambda^{2} r^{6} + 3 \, \Lambda m r^{3} - 3 \, \Lambda r^{4} - 18 \, m^{2} + 9 \, m r}{9 \, r^{3}}, \bar{\Gamma}_{ \phantom{\, r} \, r \, t }^{ \, r \phantom{\, r} \phantom{\, t} } = \frac{\Lambda r^{3} + 6 \, m - 3 \, r}{3 \, r}, \bar{\Gamma}_{ \phantom{\, r} \, r \, r }^{ \, r \phantom{\, r} \phantom{\, r} } = -\frac{\Lambda r^{3} - 3 \, m}{\Lambda r^{4} + 6 \, m r - 3 \, r^{2}},\, \bar{\Gamma}_{ \phantom{\, {\phi}} \, {\phi} \, {\theta} }^{ \, {\phi} \phantom{\, {\phi}} \phantom{\, {\theta}} }=\frac{\cos\left({\theta}\right)}{\sin\left({\theta}\right)}\\ \bar{\Gamma}_{ \phantom{\, r} \, {\theta} \, {\theta} }^{ \, r \phantom{\, {\theta}} \phantom{\, {\theta}} }= \frac{1}{3} \, \Lambda r^{3} + 2 \, m - r, \bar{\Gamma}_{ \phantom{\, r} \, {\phi} \, {\phi} }^{ \, r \phantom{\, {\phi}} \phantom{\, {\phi}} } = \frac{1}{3} \, {\left(\Lambda r^{3} + 6 \, m - 3 \, r\right)} \sin\left({\theta}\right)^{2}, \bar{\Gamma}_{ \phantom{\, {\theta}} \, r \, {\theta} }^{ \, {\theta} \phantom{\, r} \phantom{\, {\theta}} } = \frac{1}{r}, \bar{\Gamma}_{ \phantom{\, {\theta}} \, {\theta} \, t }^{ \, {\theta} \phantom{\, {\theta}} \phantom{\, t} } = \frac{\Lambda r^{3} + 6 \, m - 3 \, r}{3 \, r} \\ \bar{\Gamma}_{ \phantom{\, {\theta}} \, {\phi} \, {\phi} }^{ \, {\theta} \phantom{\, {\phi}} \phantom{\, {\phi}} } = -\cos\left({\theta}\right) \sin\left({\theta}\right), \bar{\Gamma}_{ \phantom{\, {\phi}} \, r \, {\phi} }^{ \, {\phi} \phantom{\, r} \phantom{\, {\phi}} } = \frac{1}{r}, \bar{\Gamma}_{ \phantom{\, {\phi}} \, {\theta} \, {\phi} }^{ \, {\phi} \phantom{\, {\theta}} \phantom{\, {\phi}} } = \frac{\cos\left({\theta}\right)}{\sin\left({\theta}\right)}, \bar{\Gamma}_{ \phantom{\, {\phi}} \, {\phi} \, t }^{ \, {\phi} \phantom{\, {\phi}} \phantom{\, t} }= \frac{\Lambda r^{3} + 6 \, m - 3 \, r}{3 \, r}, \bar{\Gamma}_{ \phantom{\, {\phi}} \, {\phi} \, r }^{ \, {\phi} \phantom{\, {\phi}} \phantom{\, r} } = \frac{1}{r}, \bar{\Gamma}_{ \phantom{\, {\theta}} \, {\theta} \, r }^{ \, {\theta} \phantom{\, {\theta}} \phantom{\, r} } = \frac{1}{r}  
\end{cases}
\end{eqnarray*}
The corresponding Ricci curvature components $\bar{S}_{ij}$ are:
\begin{center}
$\left(\begin{array}{rrrr}
-\frac{\Lambda^{2} r^{3} + 6 \, \Lambda m - 3 \, \Lambda r}{3 \, r} & \frac{\Lambda r^{3} - 3 \, m}{3 \, r^{2}} & 0 & 0 \\
-\frac{\Lambda r^{3} - 3m}{3r^{2}} & \frac{\Lambda r^{3} + 3(\Lambda - 1) r + 6m}{\Lambda r^{3} + 6m - 3r} & 0 & 0 \\
0 & 0 & -\frac{\Lambda r^{4} + 3(\Lambda - 1) r^{2} + 6m r}{3} & 0 \\
0 & 0 & 0 & -\frac{(\Lambda r^4+3(\Lambda - 1)r^{2} + 6mr)}{3}\sin(\theta)^2
\end{array}\right)$
\end{center}

For $a=-\Lambda-\frac{\Lambda r^2+6m-3r}{3r}$ and $b=1$, we check that the nontrivial $\bar{S}_{ij}$ components satisfy:
\begin{eqnarray*}
\frac{\Lambda^{2} r^{3} + 6 \Lambda m - 3  \Lambda r}{3 r}=\hat{S}_{tt} &=& ag_{tt}+b\pi_t\pi_t\\
\frac{\Lambda r^{3} + 3(\Lambda - 1) r + 6m}{\Lambda r^{3} + 6m - 3r}=\hat{S}_{rr} &=& ag_{rr}+b\pi_r\pi_r\\
\frac{\Lambda r^{4} + 3(\Lambda - 1) r^{2} + 6m r}{3}=\hat{S}_{\theta\theta} &=& ag_{\theta\theta}+b\pi_\theta\pi_\theta\\
\frac{\Lambda r^{4} + 3(\Lambda - 1) r^{2} + 6m r}{3}\sin^2\theta=\hat{S}_{\phi\phi} &=& ag_{\phi\phi}+b\pi_\phi\pi_\phi
\end{eqnarray*}
Hence, $(M,\bar{\nabla})$ is a $S(QE^n)$ manifold.

\end{examp}
%%%%%%%%%%%%%%%%%%%%%%%%%%%%%%%%%%%%%%%%%%%%%%%%%%%%%%%
%%%%%%%%%%%%%%%%%%%%%%%%%%%%%%%%%%%%%%%%%%%%%%%%%%%%%%%
\begin{examp}
We consider a three-dimensional manifold $M^{3}$ endowed with a
metric
\begin{equation*}
g_{ij}dx^{i}dx^{j}=e^{x^{1}}[(dx^{1})^{2}-(dx^{2})^{2}]+(dx^{3})^{2}
\end{equation*}
and a non-zero 1-form $\pi=e^{\frac{1}{2}x^{1}-\frac{1}{2}x^{2}}dx^{1}-e^{\frac{1}{2}x^{1}-\frac{1}{2}x^{2}}dx^{2}+dx^{3}$.
\end{examp}
The non-zero components of the corresponding Levi-Civita connnection are $\Gamma_{11}^{1}=\Gamma_{12}^{2}=\Gamma_{22}^{1}=\frac{1}{2}$ and the manifold is Ricci flat. Furthermore, $\pi$ is a parallel one-form with respect to this connection. 

The semi-symmetric metric connection corresponding to the 1-form $\pi$ is given by
\begin{equation*}
\bar{\Gamma}_{ij}^{k}=\Gamma_{ij}^{k}+\pi_{j}\delta_{i}^{k}-g_{ij}\pi^{k},
\end{equation*}
and the non-zero coefficients are
\begin{eqnarray*}
\begin{cases}
\bar{\Gamma}_{11}^{1}=\frac{1}{2},\bar{\Gamma}_{11}^{2}=-e^{\frac{1}{2}x^{1}-\frac{1}{2}x^{2}},\bar{\Gamma}_{11}^{3}=-e^{x^{1}},\bar{\Gamma}_{12}^{1}=-e^{\frac{1}{2}x^{1}-\frac{1}{2}x^{2}},\bar{\Gamma}_{12}^{2}=\frac{1}{2},\bar{\Gamma}_{13}^{1}=1,\\
\bar{\Gamma}_{23}^{2}=1,\bar{\Gamma}_{21}^{2}=\frac{1}{2}+e^{\frac{1}{2}x^{1}-\frac{1}{2}x^{2}},\bar{\Gamma}_{22}^{1}=\frac{1}{2}+e^{\frac{1}{2}x^{1}-\frac{1}{2}x^{2}},\bar{\Gamma}_{22}^{3}=e^{x^{1}}, \bar{\Gamma}_{31}^{3}=e^{\frac{1}{2}x^{1}-\frac{1}{2}x^{2}},\\
\bar{\Gamma}_{32}^{3}=-e^{\frac{1}{2}x^{1}-\frac{1}{2}x^{2}},
\bar{\Gamma}_{33}^{1}=-e^{-\frac{1}{2}x^{1}-\frac{1}{2}x^{2}}=\bar{\Gamma}_{33}^{2}.
\end{cases}
\end{eqnarray*}

%We obtain the non-zero $(1,3)$ curvatures of semi-symmetric metric connection by some complicated calculations 
% \begin{eqnarray*}
%\begin{cases}\bar{R}_{211}^{2}=-e^{x^{1}},\bar{R}_{311}^{3}=e^{x^{1}-x^{2}},\bar{R}_{312}^{3}=-e^{x^{1}-x^{2}},\\\bar{R}_{213}^{2}=e^{\frac{1}{2}x^{1}-\frac{1}{2}x^{2}},\bar{R}_{321}^{3}=-e^{x^{1}-x^{2}},\bar{R}_{122}^{1}=e^{x^{1}},\\\bar{R}_{322}^{3}=e^{x^{1}-x^{2}},\bar{R}_{123}^{1}=-e^{\frac{1}{2}x^{1}-\frac{1}{2}x^{2}},\bar{R}_{231}^{2}=e^{\frac{1}{2}x^{1}-\frac{1}{2}x^{2}},\\\bar{R}_{132}^{1}=-e^{\frac{1}{2}x^{1}-\frac{1}{2}x^{2}},\bar{R}_{133}^{1}=e^{-x^{2}},\bar{R}_{233}^{2}=-e^{-x^{2}}.\end{cases}\end{eqnarray*}
The Ricci curvature components $\bar{S_{ij}}$ satisfies
\begin{eqnarray*}
\begin{cases}
\hat{S}_{11}=-e^{x^{1}}+e^{x^{1}-x^{2}}=-g_{11}+\pi_{1}\pi_{1},\\
\hat{S}_{12}=-e^{x^{1}-x^{2}}=-g_{12}+\pi_{1}\pi_{2},\\
\hat{S}_{13}=e^{\frac{1}{2}x^{1}-\frac{1}{2}x^{2}}=-g_{13}+\pi_{1}\pi_{3},\\
%\bar{S}_{21}=\bar{R}_{121}^{1}+\bar{R}_{221}^{2}+\bar{R}_{321}^{3}=-e^{x^{1}-x^{2}}=-g_{21}+\pi_{2}\pi_{1},\\
\hat{S}_{22}=e^{x^{1}}+e^{x^{1}-x^{2}}=-g_{22}+\pi_{2}\pi_{2},\\
\hat{S}_{23}=-e^{\frac{1}{2}x^{1}-\frac{1}{2}x^{2}}=-g_{23}+\pi_{2}\pi_{3},\\
%\bar{S}_{31}=\bar{R}_{131}^{1}+\bar{R}_{231}^{2}+\bar{R}_{331}^{3}=e^{\frac{1}{2}x^{1}-\frac{1}{2}x^{2}}=-g_{31}+\pi_{3}\pi_{1},\\
%\bar{S}_{32}=\bar{R}_{132}^{1}+\bar{R}_{232}^{2}+\bar{R}_{332}^{3}=-e^{\frac{1}{2}x^{1}-\frac{1}{2}x^{2}}=-g_{32}+\pi_{3}\pi_{2},\\
\hat{S}_{33}=0=-g_{33}+\pi_{3}\pi_{3},
\end{cases}
\end{eqnarray*}
now we take $a=-1$ and $b=1$, then
$$\bar{S}_{ij}=ag_{ij}+b\pi_{i}\pi_{j}, i,j=1,2,3$$
therefore $(M^{3},\bar{\nabla})$ is a $S(QE^{n})$ manifold.

\section{ Acknowledgments}

The third author would like to thank Professor H. Z. Li for his
encouragement and help!

\end{document}